
\documentclass[12pt, reqno]{amsart}
\usepackage{amssymb}
\usepackage[mathscr]{eucal}
\usepackage{graphicx}
\usepackage{float}
\usepackage{subfigure}
\usepackage{xspace}
\usepackage{tikz}

\usepackage[pagewise]{lineno}

\newcommand{\p}{\partial}
\newcommand{\dd}{\mathrm{d}}

\newcommand{\R}{\mathbb{R}}
\newcommand{\ti}{\textsf{I}_\Omega}
\newcommand{\tiw}{\textsf{I}_{W}}

\newcommand{\I}{\mathcal{I}}
\newcommand{\D}{\mathbb{\D}}
\newcommand{\n}{{\mathbf n}}

\newcommand{\LL}{\mathcal{L}}

\newcommand{\B}{{\mathcal B}}

\newcommand{\del}{{\partial}}

\newcommand{\tr}{\tilde \rho}

\newcommand{\tx}{\tilde{x}}
\newcommand{\ty}{\tilde{y}}
\newcommand{\tE}{\widetilde{E}}
\newcommand{\tu}{\tilde{u}}
\newcommand{\tv}{\tilde{v}}
\newcommand{\tp}{\tilde{p}}

\textwidth=15.5cm \textheight=23cm \hoffset=-1.5cm \voffset=-0.2cm
\theoremstyle{plain}
\newtheorem{theorem}{Theorem}[section]

\newtheorem{lemma}{Lemma}[section]

\theoremstyle{definition}
\newtheorem{definition}{Definition}[section]
\theoremstyle{remark}

\newtheorem{remark}{Remark}[section]
\numberwithin{equation}{section}

\begin{document}
\title[Hypersonic limit of Euler flows passing a wedge] {Hypersonic limit of two-dimensional steady compressible Euler flows passing a straight wedge}

\author{Aifang Qu}
\author{Hairong Yuan}
\author{Qin Zhao}

\address[A. Qu]
         {Department of Mathematics, Shanghai Normal University,
Shanghai,  200234,  China} \email{\tt afqu@shnu.edu.cn, aifangqu@163.com}

\address[H. Yuan]{School of Mathematical Sciences and Shanghai Key Laboratory of Pure Mathematics and Mathematical Practice,
East China Normal University, Shanghai
200241, China}
\email{\tt hryuan@math.ecnu.edu.cn}

\address[Q. Zhao]{School of Mathematical Sciences, Shanghai Jiao Tong University, Shanghai 200240, China}
\email{\tt zhao@sjtu.edu.cn}

\subjclass[2010]{35L65, 35L67, 35B30, 76K05, 35R06}
 \keywords{Compressible Euler equations; hypersonic; shock wave; wedge; Dirac measure; measure solution.}
\date{\today}

\begin{abstract}
We formulated a problem on hypersonic limit of two-dimensional steady non-isentropic compressible Euler flows  passing a straight wedge. It turns out that  Mach number of the upcoming uniform supersonic flow increases to infinite may be taken as the adiabatic exponent $\gamma$ of the polytropic gas decreases to $1$. We proposed a form of the Euler equations which is valid if the unknowns are measures and constructed a measure solution contains Dirac measures supported on the surface of the wedge. It is proved that as $\gamma \to1$, the sequence of solutions of the compressible Euler equations that containing a shock ahead of the wedge converge vaguely as measures to the measure solution we constructed.  This justified the Newton theory of hypersonic flow passing obstacles in the case of two-dimensional straight wedges. The result also demonstrates the necessity of considering general measure solutions in the studies of boundary-value problems of systems of hyperbolic conservation laws.
\end{abstract}
\maketitle

\tableofcontents


\section{{Introduction}}\label{S:1}
We are concerned with steady non-isentropic
compressible Euler flows passing an obstacle.
It is important to understand what happens if the Mach number of the upcoming supersonic flows goes to infinity, as required, for example, by the designation of hypersonic aircraft ({\it cf.} \cite[chapter 1]{A}. From mathematical point of view, it is also interesting, since there appears some new singularity which requires rigorous justification. It also indicates the
indispensable role of measure solutions in the studies of systems of conservation laws.

It has been discovered for a long time that for some conservation laws, the Riemann problems cannot be solved in the sense of integral weak solutions that are measurable functions with respect to the Lebesgue measure (see \cite{YZ} and references cited therein). Singular measure (with respect to the Lebesgue measure), such as Dirac measure, was introduced to solve such equations. For example, it is well-known that concentration of mass, $i.e.$, $\delta$-shocks, is notably involved in the solution of the pressureless Euler equations \cite{CL}, and well-posedness for initial data being Radon measures was established in \cite{HW} for one-space-dimensional case. For compressible Euler equations of Chaplygin gases,  $\delta$-shocks
are also necessary to solve certain Riemann problems (see, for example, \cite{KW,SWY}). However, one does not know whether such singular solutions are necessary for boundary  value problems of the compressible Euler equations of polytropic gases, even for a physically significant limiting case. In this paper, we will show that the Newton theory (see, for example, \cite[section 3.2]{A}, \cite[chapter 3]{HP}, \cite[section 5.2]{R}, or \cite{LO}) of hypersonic flow passing a wedge can be explained rigorously as measure solutions of a boundary value problem of the steady compressible Euler equations when the adiabatic exponent of the polytropic gases decreases to $1$. Also, to our knowledge, considering flows in  domains with boundaries,  there is no analysis on singular measure solutions in previous works.

We believe measure solutions will play an important role in the studies of multidimensional hyperbolic conservation laws. Firstly, as shown above, it appears in significant physical problems. Secondly,  it is strategically reasonable to construct a measure solution to the multi-dimensional compressible Euler equations, as done in \cite{CSW} by applying the theory of optimal transport and gradient flows, and then study its regularity later, for example, showing no singular measure appears in the solution in some cases. Thirdly, as motivated by \cite{H,HZ}, once we justified the formation of singular measures, as done in this paper, one may use such singular solutions as approximate solutions to study some difficult problems, such as supersonic flows past obstacles studied in \cite{CZZ,WZ}. This paper represents the first step in this direction.

For the problem of hypersonic limit, intuitively, when the Mach number of the upcoming supersonic flow goes to infinity, it looks that the pressure of the gas is vanishing, and the limit flow may be corresponding to the pressureless flow, a case similar to the one studied by Chen and Liu in \cite{CL}, where they  proved that $\delta$-shocks may form in the vanishing pressure limit of  solutions containing two shocks to the Euler equations for isentropic fluids in one-dimensional Riemann problem, and it is indeed a measure solution of the pressureless flow. See \cite{YZ} for more related results.
For the case of two dimensional steady pressureless Euler flows, $\delta$-shocks have been constructed in \cite{ZZ}. However,
in this work, we found that hypersonic limit is not the vanishing pressure limit, and the singularity is not a $\delta$-shock.    This is also true for the higher Mach number limit for the one-dimensional piston problem \cite{QZY}.

We remark by passing that the readers shall not be confused by the measure solutions we called in this paper (or measure-valued solutions called in \cite{CSW}) with measure-valued solutions proposed by DiPerna \cite{Diperna}, which was intensively studied recently (see, for example, \cite{BDS,BF} and references therein). Our measure solutions are measures supported on the physical space of independent variables, which describe, for example, concentration of mass etc. on a space-time set. The measure-valued solution of DiPerna is a measure supported  on the phase space of dependent variables, and reflects uncertain of taking values of the unknowns in the phase space at a space-time point. Therefore the interpretations of Euler equations are quite different ({\it cf.} Definition \ref{def32}).

The rest of the paper is divided into three sections.  In \S \ref{sec2}, we review the problem of supersonic flows passing a wedge and present the solutions obtained by shock polar. We show  the hypersonic limit is actually the limit that $\gamma\to 1$ and present a key lemma. In \S \ref{sec3}, we propose a form of the two-dimensional steady compressible Euler equations and construct its measure solutions. The main difficulty is how to understand the Euler equations when the unknowns are measures, and provide the appropriate concept of solutions. Finally, in \S \ref{sec4}, we show the sequence of integral weak solutions of the Euler equations obtained by shock polar converge vaguely as measures to the limit singular measure solution we constructed, and consistency holds for the general form of Euler equations we presented. This justified the hypersonic limit rigorously and  provide a mathematical justification of the Newton theory. The main result of this paper is summarized at the end of the paper, as Theorem \ref{thm41}.

\section{The hypersonic limit problem}\label{sec2}
In this section we formulate the hypersonic limit problem of uniform supersonic flows passing a straight wedge. Some results of this section are well-known. However, for completeness and easier reading, we shall present them with some details.

\subsection{The problem of compressible Euler flows passing a wedge}\label{sec21}
We consider the following two-dimensional steady non-isentropic compressible Euler system for polytropic gases, consisting of conservation of mass, momentum and energy:
\begin{equation}\label{eq21}
\begin{cases}
\displaystyle \del_x(\rho u)
  + \del_y \left(\rho v\right)=0,\\[8pt]
\displaystyle \del_x(\rho u^2+p)
  + \del_y \left(\rho u v\right)=0,\\[8pt]
\displaystyle \del_x\left(\rho uv\right)
  + \del_y \left(\rho v^2+p \right) = 0,\\[8pt]
  \displaystyle \del_x\left(\rho uE\right)
  + \del_y \left(\rho vE\right) = 0.
\end{cases}
\end{equation}
Here $\rho$,  $p$, and $(u, v)$ represent respectively  the density of mass, scalar pressure and velocity of the flow, and $$E=\frac{1}{2}(u^2+v^2)+\frac{\gamma}{\gamma-1}\frac{p}{\rho}$$
is the total energy per unit mass, with $\gamma>1$ the adiabatic exponent. Recall that for polytropic gas, the constitutive
relations are given by
\[
p=\kappa \rho^{\gamma}\exp(S/c_v), \qquad
T_e=\frac{p}{(\gamma-1)c_v\rho},
\]
where $S$ and $T_e$ are the entropy and  temperature of the flow respectively, and $\kappa, c_{v}$ are positive constants.

System \eqref{eq21} can be written in the general form of conservation laws:
\begin{equation}\label{eq22}
\del_x F(U)+\del_y G(U)=0, \quad U=(\rho, u, v, E)^{\top},
\end{equation}
where
\[
\begin{split}
&F(U)=\left(\rho u,\; \rho u^2+p, \;\rho uv,\; \rho uE\right)^\top, \\[5pt]
&G(U)=\left(\rho v,\;\rho uv,\; \rho v^2+p,\; \rho
vE\right)^\top,
\end{split}
\]
and
\begin{equation}\label{eq23}
p=(E-\frac12(u^2+v^2))\frac{\gamma-1}{\gamma}\rho.
\end{equation}
Recall that for polytropic gas, the local sound speed is given by
$c=\sqrt{{\gamma p}/{\rho}}$, and Mach number is defined by  $M=\sqrt{u^2+v^2}/c$. For $M>1$, the flow is called supersonic and it is well-known that \eqref{eq21} is then a hyperbolic system of conservation laws.

The solid wedge (or a ramp) is given by $\{(x,y)\in\R^2: x\ge0, 0\le y\le ax\}$, and $a=\tan\theta>0$, with $\theta\in(0,\frac{\pi}{2})$ the opening angle of the wedge. Therefore, the domain occupied by gas is
$$\Omega=\{(x,y)\in\R^2: x>0, y>ax\}.$$
On the surface of the wedge
$$W=\{(x,y)\in\R^2: x\ge0, y=ax\},$$
we propose the slip condition
\begin{eqnarray}\label{eq24}
v=au \qquad\text{on}\ \ W.
\end{eqnarray}
On the line $\I=\{(x,y)\in\R^2: x=0, y>0\}$,
the flow is given:
\begin{equation}\label{eq25}
U=U_\infty\qquad\text{on}\ \ \I,
\end{equation}
and $U_\infty=(\rho_\infty, u_\infty, 0, E_\infty)^\top$ is constant.

The solution to problem \eqref{eq22}--\eqref{eq25} is well-known if $M_\infty=\sqrt{\frac{\rho_\infty u_\infty^2}{\gamma p_\infty}}>1$ and we will present it in \S \ref{sec22}. What we are interested is what happens for the hypersonic limit \begin{equation}\label{eq26}M_\infty\to \infty.\end{equation}

To understand \eqref{eq26}, we need to introduce the following non-dimensional independent and dependent variables:
\begin{eqnarray*}
&&\tx=x/l,\qquad \ty=y/l;\\
&& \tr=\rho/\rho_\infty,\ \ \tu=u/u_\infty, \ \ \tv=v/u_\infty,\ \ \tilde{S}=S/c_v,\\
&& \tp=p/(\rho_\infty u_\infty^2),  \ \ \ \tE=E/u_\infty^2.
\end{eqnarray*}
Here $l>0$ is a constant with dimension of length.
Direct computation yields
$$\tp=\frac{1}{\gamma M_\infty^2}\exp(\tilde{S})\tr^\gamma,$$
and $\tilde{c}=\sqrt{\gamma \tp/\tr}=c/u_\infty$, and it is easy to check that $\tilde{U}=(\tr, \tu, \tv, \tE)^\top$ still solves \eqref{eq22} and \eqref{eq23}, with $x, y$ there replaced by $\tx, \ty$. So without loss of generality, {\it in the rest of this paper we still write $\tilde{U}$ as $U$, and $(\tx, \ty)$ as $(x,y)$}. The domain and its boundary, as well as boundary condition \eqref{eq24} are also {\it not} changed. The initial data \eqref{eq25} becomes
\begin{eqnarray}\label{eq27}
U=U_0=(1,1,0, E_0)^\top,
\end{eqnarray}
with $E_0>1/2$ being a fixed constant. Therefore, there are only three parameters, namely $\theta\in(0,\pi/2), \gamma>1, E_0>1/2$ to determine the problem of  supersonic flow passing a wedge.

Applying \eqref{eq23} with the data \eqref{eq27}, one has
\begin{equation}\label{eq28}
\frac{1}{M_0^2}=c_0^2=(\gamma-1)(E_0-\frac12).
\end{equation}
So we conclude that:
 {\begin{center}{\it For fixed total energy per unit mass,\\ the hypersonic limit $M_0\to\infty$ is equivalent to the limit $\gamma\downarrow 1$.}\end{center}}
\noindent This is the case that we are mostly interested in this paper. For the other case, {\it i.e.}, $\gamma>1$ being fixed and $E_0\downarrow1/2$, see Remark \ref{rm21} below.

Since $c_0=\sqrt{\gamma p_0/\rho_0}$, from \eqref{eq28}, we also infer that
\begin{equation}\label{eq29}
p_0=\frac{\gamma-1}{\gamma}(E_0-\frac12).
\end{equation}
So at first glance the hypersonic limit $\gamma\to1$ is also the vanishing pressure limit. However, this is not true as we will see later.

For simplicity of writing, from now on we set
\begin{equation}\label{eq210}
\epsilon=\gamma-1, \qquad E_0'=E_0-\frac12.
\end{equation}

\subsection{Existence of weak solutions containing shocks}\label{sec22}

We now review some results on solutions of problem \eqref{eq22}-\eqref{eq24}\eqref{eq27}. It is known that the solution is not continuous and shocks will appear. Therefore we need the following concept.
\begin{definition}[integral entropy solutions]\label{def21}
We say $U\in L^\infty(\Omega)$ is an integral entropy solution to problem \eqref{eq22}-\eqref{eq24}\eqref{eq27}, if for any $\phi\in C_0^1(\R^2)$ ({\it i.e.} continuously differentiable functions with compact supports in $\R^2$), there holds
\begin{eqnarray}\label{eq211}
&&\int_\Omega (F(U)\p_x\phi+G(U)\p_y\phi)\,\dd x\dd y\nonumber\\
&=&\int_0^\infty (aF(U|_W)-G(U|_W))\phi(x,ax)\,\dd x-\int_0^\infty F(U_0)\phi(0,y)\,\dd y,
\end{eqnarray}
and furthermore, the pressure increases if the flow passing across any discontinuities in $U$.
\end{definition}

Problem \eqref{eq22}-\eqref{eq24}\eqref{eq27} is a Riemann problem with boundary conditions and one can construct a solution $U(x,y)=V(x/y)$. Set $\eta=x/y$.
Suppose the solution is piecewise constant, of the form
\begin{equation*}\label{eq216add}
V(\eta)=\begin{cases}
V_0=(1,1,0,E_0)^\top,& 0\le\eta<\frac{1}{\sigma},\\
V_1,& \frac{1}{\sigma}<\eta\le\frac1a.
\end{cases}\end{equation*}
Then to fulfill \eqref{eq211}, by taking $\phi$ supported near the discontinuity $\eta=1/\sigma$, $V_1$ and $\sigma$ shall satisfy the following Rankine-Hugoniot conditions:
\begin{eqnarray}\label{eq216}
(F(V_1)-F(V_0))-\frac{1}{\sigma}(G(V_1)-G(V_0))=0.
\end{eqnarray}
From this one can solve that (see, for example, \cite[pp.280-282]{RG})
\begin{eqnarray*}
&&\frac{\rho_1}{\rho_0}=\frac{(\gamma+1)M_0^2\sin^2\alpha}{2+(\gamma-1)M_0^2\sin^2\alpha},\quad\frac{p_1}{p_0}=\frac{2\gamma M_0^2\sin^2\alpha-(\gamma-1)}{\gamma+1},\\
&&\sigma=\tan\alpha=\frac{u_0-u_1}{v_1},\quad
v_1^2=\frac{(u_0-u_1)^2(u_0u_1-V_*^2)}{V_*^2+\frac{2}{\gamma+1}u_0^2-u_0u_1}, \\
&&\frac{1}{M_0^2}=\frac{\gamma+1}{2}\frac{V_*^2}{u_0^2}-\frac{\gamma-1}{2},
\end{eqnarray*}
where $\alpha$ is the angle between the shock-front and the $x$-axis.  Substitute $\gamma=\epsilon+1$, $M_0^2=1/(\epsilon E_0')$, and $u_0=1, \rho_0=1$, $p_0=\frac{\epsilon}{\epsilon+1}E_0'$ into these formulas, we get
\begin{eqnarray}
&&\rho_1^\epsilon=\frac{\epsilon+2}{\epsilon}\frac{\sin^2\alpha^\epsilon}{2E_0'+\sin^2\alpha^\epsilon},\label{eq217}\\
&&p_1^\epsilon=\frac{2(\epsilon+1)\sin^2\alpha^\epsilon-\epsilon^2E_0'}{(\epsilon+1)(\epsilon+2)},\label{eq218}\\
&& {(v_1^\epsilon)}^2=\frac{(1-u_1^\epsilon)^2(u_1^\epsilon-\frac{\epsilon}{\epsilon+2}-\frac{2\epsilon}{\epsilon+2}E_0')}{1-u_1^\epsilon+\frac{2\epsilon}{\epsilon+2}E_0'},\label{eq219}\\
&&\sigma^\epsilon=\tan\alpha^\epsilon=\frac{1-u_1^\epsilon}{v_1^\epsilon}.\label{eq220}
\end{eqnarray}
The above Rankine-Hugoniot conditions also imply that
\begin{equation}\label{eq221}
E_1^\epsilon=E_0.
\end{equation}

The formula \eqref{eq219} with fixed $\epsilon>0$ represents a curve in the $(u, v)$-plane, which is called {\it shock polar}. Using the boundary condition \eqref{eq24}, for each fixed $\epsilon>0$, one may uniquely solve $V_1^\epsilon=(\rho_1^\epsilon, u_1^\epsilon, v_1^\epsilon=au_1^\epsilon, E_0)^\top$ which satisfies $p_1^\epsilon>p_0.$ We thus obtain a sequence of integral entropy solutions
\begin{equation}\label{eq222}
U^\epsilon(x,y)=V^\epsilon(\frac{x}{y})=\begin{cases}
V_0=(1,1,0,E_0)^\top,& 0\le\frac{x}{y}<\frac{1}{\sigma^\epsilon},\\
V_1^\epsilon,& \frac{1}{\sigma^\epsilon}<\frac{x}{y}\le\frac1a
\end{cases}\end{equation}
to problem \eqref{eq22}-\eqref{eq24}\eqref{eq27}, for all $\epsilon>0$.

\subsection{Point-wise limits of weak solutions}\label{sec23}
As mentioned above, we need to understand $\lim_{\epsilon\to 0}U^\epsilon$.
\begin{lemma}\label{lem22}
For $\epsilon>0$ small, $\rho_1^\epsilon, u_1^\epsilon, v_1^\epsilon, \sigma^\epsilon$ are all $C^2$ with respect to $\epsilon$, and
\begin{eqnarray}
&&\lim_{\epsilon\to 0}u_1^\epsilon=\cos^2\theta,\quad\lim_{\epsilon\to 0}v_1^\epsilon=\cos\theta\sin\theta,\label{eq223}\\
&& \lim_{\epsilon\to 0}p_1^\epsilon=\sin^2\theta,\label{eq224}\\
&& \lim_{\epsilon\to 0}\epsilon\rho_1^\epsilon=\frac{2\sin^2\theta}{2E_0'+\sin^2\theta},\label{eq225}\\
&& \sigma^\epsilon-a=\frac{\frac12\sin^2\theta+E_0'}{\cos^3\theta\sin\theta}\epsilon+o(\epsilon),\label{eq226}\\
&&\lim_{\epsilon\to 0}\rho_1^\epsilon(\sigma^\epsilon-a)=\frac{\sin\theta}{\cos^3\theta}.\label{eq227}
\end{eqnarray}
\end{lemma}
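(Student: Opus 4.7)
The plan is to eliminate $v_1^\epsilon$ from \eqref{eq219} using the slip condition $v_1^\epsilon=au_1^\epsilon$ (with $a=\tan\theta$), reducing the shock polar system to a single scalar equation $F(u_1^\epsilon,\epsilon)=0$ which I can analyse via the implicit function theorem. Clearing the denominator in \eqref{eq219} yields
\[
F(u,\epsilon):=a^2u^2\Bigl(1-u+\tfrac{2\epsilon E_0'}{\epsilon+2}\Bigr)-(1-u)^2\Bigl(u-\tfrac{\epsilon(1+2E_0')}{\epsilon+2}\Bigr),
\]
and setting $\epsilon=0$ gives the factorisation $F(u,0)=u(1-u)\bigl((1+a^2)u-1\bigr)$, whose roots are $0$, $1$, and $(1+a^2)^{-1}=\cos^2\theta$. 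The branch singled out in \S\ref{sec22} by the entropy condition $p_1^\epsilon>p_0$ corresponds to the intermediate root, so $u_1^\epsilon\to\cos^2\theta$; together with $v_1^\epsilon=au_1^\epsilon$ this already establishes \eqref{eq223}.

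Next I would check that $F_u(\cos^2\theta,0)=(1+a^2)\cos^2\theta\sin^2\theta=\sin^2\theta\neq 0$ (using $(1+a^2)\cos^2\theta=1$); the implicit function theorem then furnishes a $C^\infty$ branch $\epsilon\mapsto u_1^\epsilon$ near $\epsilon=0$, and the $C^2$ regularity of $v_1^\epsilon,\rho_1^\epsilon,\sigma^\epsilon$ follows from \eqref{eq217}, \eqref{eq220} and the slip condition. Reading \eqref{eq220} off at $\epsilon=0$ gives $\tan\alpha^\epsilon\to(1-\cos^2\theta)/(\sin\theta\cos\theta)=\tan\theta$, so $\alpha^\epsilon\to\theta$. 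Plugging this into \eqref{eq218} produces \eqref{eq224}, while \eqref{eq217} yields \eqref{eq225} once the prefactor $(\epsilon+2)/\epsilon$ is absorbed into $\epsilon\rho_1^\epsilon$.

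For the expansion \eqref{eq226}, I would rewrite
\[
\sigma^\epsilon-a=\frac{1-u_1^\epsilon}{au_1^\epsilon}-a=-\frac{(1+a^2)\bigl(u_1^\epsilon-\cos^2\theta\bigr)}{au_1^\epsilon},
\]
and compute $u_1'(0)=-F_\epsilon(\cos^2\theta,0)/F_u(\cos^2\theta,0)$ by implicit differentiation; a short calculation of $F_\epsilon$ at $(\cos^2\theta,0)$ yields $\sin^2\theta\bigl(E_0'+\tfrac12\sin^2\theta\bigr)$, so $u_1'(0)=-\bigl(E_0'+\tfrac12\sin^2\theta\bigr)$. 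Substituting this into the identity above and using $au_1^0=\sin\theta\cos\theta$, $1+a^2=1/\cos^2\theta$ delivers \eqref{eq226}. Finally, \eqref{eq227} is obtained by multiplying the leading-order asymptotics \eqref{eq225} and \eqref{eq226}: the factor $2E_0'+\sin^2\theta$ cancels against $2(E_0'+\tfrac12\sin^2\theta)$, leaving $\sin\theta/\cos^3\theta$. This final cancellation, together with the computation of $F_\epsilon$, is the only step that needs careful bookkeeping and is the main (though still elementary) obstacle; everything else is a routine unpacking of the implicit function theorem.
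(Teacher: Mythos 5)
Your proposal is correct and follows essentially the same route as the paper: both reduce the shock polar \eqref{eq219} with the slip condition $v_1=au_1$ to a scalar equation (your $F$ is just the negative of the paper's $H$, parametrized directly by $\epsilon$ instead of $\lambda=\epsilon/(\epsilon+2)$), select the root $\cos^2\theta$, apply the implicit function theorem with the same nondegeneracy value $\sin^2\theta$, obtain $u_1'(0)=-(E_0'+\tfrac12\sin^2\theta)$, and derive \eqref{eq224}--\eqref{eq227} from \eqref{eq217}, \eqref{eq218}, \eqref{eq220} with the same final cancellation. No substantive differences.
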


\begin{proof}
1. Note that the entropy condition requires that $u=u_1<1$.  We now set $\lambda=\lambda(\epsilon)=\frac{\epsilon}{\epsilon+2}$ and ({\it cf.} \eqref{eq219})
$$H(u,\lambda)=(1-u)^2(u-\lambda-2\lambda E_0')-a^2u^2(1-u+2\lambda E_0').$$
The equation $H(u, 0)=0$ has roots $u=1, u=0$ and $u=\frac{1}{1+a^2}=\cos^2\theta.$ By physical considerations, we are only interested in the root $u(0)=\cos^2\theta$ in this paper.

2. Direct computation shows that $$\p_uH(u(0),0)=-\sin^2\theta<0.$$ So the implicit function theorem yields that we can solve a function $u=u(\lambda)\in C^2$  so that $H(u(\lambda), \lambda)=0$ for $\lambda>0$ small. Since $\p_\lambda H(u(0),0)=-\sin^2\theta(\sin^2\theta+2E_0')$, and $\lambda'(0)=1/2$, we get a $C^2$ function $u_1(\epsilon)=u(\lambda(\epsilon))$ for $\epsilon\ge0$ small, and
\begin{eqnarray}\label{eq228}
u_1(\epsilon)-u_1(0)=-(E_0'+\frac12\sin^2\theta)\epsilon+o(\epsilon).
\end{eqnarray}
This implies \eqref{eq223} with $u_1^\epsilon=u_1(\epsilon)$.

3. Then from \eqref{eq220}, we infer that  $\sigma^\epsilon$ and $\alpha^\epsilon$ are $C^2$ for $\epsilon\ge 0$, and particularly,
\begin{eqnarray}\label{eq229}
\lim_{\epsilon\to0}\sigma^\epsilon=\tan\theta=a,\quad \lim_{\epsilon\to0}\alpha^\epsilon=\theta.
\end{eqnarray}
The formulas \eqref{eq224}\eqref{eq225} follow directly from \eqref{eq217}\eqref{eq218}.

4. By \eqref{eq220},
\begin{eqnarray*}
\sigma^\epsilon-a=\frac1a\left(\frac{1}{u_1^\epsilon}-\frac{1}{\cos^2\theta}\right),
\end{eqnarray*}
so $$\sigma'(0)=-\frac1a\frac{1}{\cos^4\theta}u_1'(0)=\frac{E_0'+\frac12\sin^2\theta}{\sin\theta\cos^3\theta}$$ by \eqref{eq228}. This proves \eqref{eq226}, and \eqref{eq227} follows from \eqref{eq225} and \eqref{eq226}.
\end{proof}

The formula \eqref{eq224} is called as {\it Newton's sine-squared pressure law} (see (3.1.1) in \cite[p. 132]{HP}, or (3.3) in \cite[section 3.2]{A}). From \eqref{eq224} we see the hypersonic limit is different from the vanishing pressure limit. As the Mach number $M_0$ increases to infinity, the shock-front approaches the surface of the wedge, with distance of the order $\epsilon=\gamma-1$, or $1/M_0^2$. Since the support of $V_1^\epsilon$ becomes narrower and narrower, and the density blows up with an order $1/(\gamma-1)$, the above description of hypersonic limit is not sufficient. We need to introduce singular measure solutions to Euler equations.

\begin{remark}\label{rm21}
For the case that $\epsilon>0$ being fixed and $\delta=E_0'\downarrow0$, from \eqref{eq217}-\eqref{eq221}, where we write $U^\epsilon, \alpha^\epsilon, \sigma^\epsilon$ to be $U^\delta, \alpha^\delta, \sigma^\delta$ to indicate the dependance of the solutions on $\delta=E_0'$, we have
\begin{eqnarray*}
&&\lim_{\delta\to 0}\rho_1^\delta=\frac{\epsilon+2}{\epsilon},\quad
\lim_{\delta\to 0}\alpha^\delta=\alpha^0,\quad
\lim_{\delta\to 0}u_1^\delta=u_1^0,\quad \lim_{\delta\to 0}v_1^\delta=v_1^0,\\
&&\lim_{\delta\to 0}\sigma^\delta=\sigma^0=\tan\alpha^0=\frac{1-u_1^0}{v_1^0},\\
&&\lim_{\delta\to 0}p_1^\delta=\frac{2\sin^2\alpha^0}{\epsilon+2},
\end{eqnarray*}
and $(u_1^0, v_1^0)$ lies on the limiting shock polar 
$$\left(u_1^0-\frac{\epsilon+1}{\epsilon+2}\right)^2+(v_1^0)^2=\left(\frac{1}{\epsilon+2}\right)^2,$$
which is a circle. So for $\theta\in(0, \arcsin(1/(\epsilon+1)))$, we may solve two points $(u_1^0, v_1^0)$ on the circle so that $v_1^0=(\tan\theta) u_1^0$, and determine the corresponding shock-front angles $\alpha^0.$ Hence there is no concentration in this case of hypersonic limit. In the rest of this paper, we always consider the more challenge case that $\delta>0$ fixed, while $\epsilon\downarrow 0.$
\end{remark}

\section{Measure solutions and Euler equations}\label{sec3}

Let $\B$ be the Borel $\sigma$-algebra of the Euclidean plane $\R^2$. In this section we always consider Radon measures on $(\R^2, \B)$, and write
$$\langle m, \phi\rangle=\int_{\R^2}\phi(x,y)m(\dd x\dd y)$$
for the pairing between a Radon measure $m$ and  a  test function $\phi\in C_0(\R^2)$.
The standard Lebesgue measure of $\R^d$ is denoted by $\LL^d$ for $d\in\mathbb{N}$. A measure $\mu$ is absolute continuous with respect to a nonnegative measure $\nu$ is denoted by $\mu\ll\nu.$ The Dirac measure supported on a curve, which is singular to $\LL^2$, is defined as below ({\it cf.} \cite{CL}).

\begin{definition}[weighted Dirac measure supported on a curve]\label{def31}
Let $L$ be a Lipschitz curve given by $x=x(t), y=y(t)$ for $t\in[0,T)$, and $w_L(t)\in L_{\mathrm{loc}}^1(0,T)$. The Dirac measure supported on $L\subset\R^2$ with weight $w_L$ is defined by
\begin{eqnarray}\label{eq31}
\langle w_L\delta_L, \phi\rangle=\int_0^Tw_L(t)\phi(x(t), y(t))\sqrt{x'(t)^2+y'(t)^2}\,\dd t,\qquad \forall \phi\in C_0(\R^2).
\end{eqnarray}
\end{definition}

As an example, consider a vector field $(f, g)$ on $\R^2$ which is $C^1$ on $\R^2\setminus L$. Then for any $\phi\in C_0^1(\R^2)$, by Green theorem, one has
\begin{eqnarray}\label{eq32}
\int_{\R^2}(f\p_x\phi+g\p_y\phi)\,\dd x\dd y=\langle ([[f]],[[g]])\cdot \n\delta_L,\phi\rangle-\int_{\R^2\setminus L}(\p_xf+\p_yg)\phi\,\dd x\dd y.
\end{eqnarray}
Here $\n$ is the unit normal vector of $L$ obtained by rotating the tangent vector of $L$ clockwise with an angle $\pi/2$, and $[[f]](x,y)=\lim_{h\downarrow0}(f((x,y)-h\n)-f((x,y)+h\n))$. So Rankine-Hugoniot conditions guarantee that there is no concentration of mass, momentum and energy along a shock-front.

\subsection{A general formulation of two-dimensional steady compressible Euler equations}\label{sec31}

We now reformulate problem \eqref{eq22}-\eqref{eq24}\eqref{eq27} so that the unknowns may be measures.

\begin{definition}\label{def32}
For fixed $\epsilon\ge 0$, let $m^0,m^1, m^2, m^3, n^0,n^1, n^2, n^3, \wp$ be Radon measures on $\overline{\Omega}$, and $w^1_p, w^2_p$ locally integrable functions on $\R^+\cup\{0\}$.  Suppose that
\begin{itemize}
\item[i)] For $\n=(a,-1)/\sqrt{1+a^2}$ being the outward unit normal vector on $W$ (the surface of the wedge), one has
    \begin{eqnarray}\label{eq33}
    (w^1_p, w^2_p)\parallel \n\quad \text{or}\quad w^1_p+aw^2_p=0\qquad\LL^1\text{-a.e.};
    \end{eqnarray}
\item[ii)] For any $\phi\in C_0^1(\R^2)$, there hold
\begin{eqnarray}
\langle m^0, \p_x\phi\rangle+ \langle n^0, \p_y\phi\rangle+\int_0^\infty(\rho_0u_0)\phi(0,y)\,\dd y=0,&&\label{eq34}\\
\langle m^1, \p_x\phi\rangle+ \langle n^1, \p_y\phi\rangle+\langle \wp, \p_x\phi\rangle+\langle
w^1_p\delta_W,\phi \rangle+\int_0^\infty(\rho_0u_0^2+p_0)\phi(0,y)\,\dd y=0,&&\label{eq35}\\
\langle m^2, \p_x\phi\rangle+ \langle n^2, \p_y\phi\rangle+\langle \wp, \p_y\phi\rangle+\langle
w^2_p\delta_W,\phi \rangle+\int_0^\infty(\rho_0u_0v_0)\phi(0,y)\,\dd y=0,&&\label{eq36}\\
\langle m^3, \p_x\phi\rangle+ \langle n^3, \p_y\phi\rangle+\int_0^\infty(\rho_0u_0E_0)\phi(0,y)\,\dd y=0;&&\label{eq37}
\end{eqnarray}

\item[iii)] There is a nonnegative Radon measure $\varrho$ so that $\wp\ll\varrho$,  $(m^0, n^0)\ll \varrho$, $(m^k, n^k)\ll (m^0, n^0)$ ($k=1,2,3$), with derivatives
\begin{eqnarray}\label{eq38}
u=\frac{m^0(\dd x\dd y)}{\varrho(\dd x\dd y)}\quad\text{and}\quad v=\frac{n^0(\dd x\dd y)}{\varrho(\dd x\dd y)}
\end{eqnarray}
satisfy $\varrho$-a.e. that
\begin{eqnarray}
&&u=\frac{m^1(\dd x\dd y)}{m^0(\dd x\dd y)}=\frac{n^1(\dd x\dd y)}{n^0(\dd x\dd y)},\label{eq39}\\
&&
v=\frac{m^2(\dd x\dd y)}{m^0(\dd x\dd y)}=\frac{n^2(\dd x\dd y)}{n^0(\dd x\dd y)},\label{eq310}
\end{eqnarray}
and there is a $\varrho$-a.e. function $E$ so that
\begin{eqnarray}
E=\frac{m^3(\dd x\dd y)}{m^0(\dd x\dd y)}=\frac{n^3(\dd x\dd y)}{n^0(\dd x\dd y)};\label{eq311}
\end{eqnarray}
\item[iv)] If $\varrho \ll \LL^2$ with derivative $\rho(x,y)$,  and $\wp \ll \LL^2$ with derivative $p(x,y)$, in a neighborhood of $(x,y)\in\Omega$, then $\LL^2$-a.e. there holds
    \begin{eqnarray}\label{eq312}
    p=\frac{\epsilon}{\epsilon+1}\rho(E-\frac12(u^2+v^2));
    \end{eqnarray}
    Furthermore, classical entropy condition is valid for discontinuities of functions $\rho, u, v, E$ in this case.
\end{itemize}
Then we call $(\varrho, u, v, E)$ a measure solution to problem \eqref{eq22}-\eqref{eq24}\eqref{eq27}.
\end{definition}

We may write \eqref{eq34}-\eqref{eq37} formally as
\begin{eqnarray*}\begin{cases}
\p_xm^k+\p_yn^k=0,\quad k=0,3,\\
\p_xm^1+\p_yn^1+\p_x\wp=w^1_p\delta_W,\\
\p_xm^2+\p_yn^2+\p_y\wp=w^2_p\delta_W.
\end{cases}\end{eqnarray*}
Condition \eqref{eq33} means the force acting by the wedge does not do work to the gas. Physically, $-(w_p^1, w_p^2)$ represents the limiting force (per unit area) of lift and drag to the wedge in the flow.

For $\epsilon>0$, by Definition \ref{def21}, one can easily check ({\it cf.} \eqref{eq222}) that
$\varrho=\rho^\epsilon(x,y)\LL^2, u=u^\epsilon, v=v^\epsilon, E=E_0$ is a measure solution to  problem \eqref{eq22}-\eqref{eq24}\eqref{eq27}, just by taking
\begin{eqnarray}\label{eq313}\begin{cases}
m^0=m^0(\epsilon)=\rho^\epsilon u^\epsilon\LL^2,\quad n^0=n^0(\epsilon)=\rho^\epsilon v^\epsilon\LL^2,\\
m^1=m^1(\epsilon)=\rho^\epsilon (u^\epsilon)^2\LL^2,\quad n^1=n^1(\epsilon)=\rho^\epsilon u^\epsilon v^\epsilon\LL^2,\\
m^2=m^2(\epsilon)=\rho^\epsilon v^\epsilon u^\epsilon\LL^2,\quad n^2=n^2(\epsilon)=\rho^\epsilon (v^\epsilon)^2\LL^2,\\
m^3=m^3(\epsilon)=\rho^\epsilon u^\epsilon E_0\LL^2,\quad n^3=n^3(\epsilon)=\rho^\epsilon v^\epsilon E_0\LL^2,\\
\wp=\wp(\epsilon)=p^\epsilon\LL^2,\quad w^1_p=w^1_p(\epsilon)=-\frac{a}{\sqrt{1+a^2}}p_1^\epsilon,\quad w^2_p=w^2_p(\epsilon)=\frac{1}{\sqrt{1+a^2}}p_1^\epsilon.
\end{cases}\end{eqnarray}

\subsection{A measure solution for hypersonic limit case $\epsilon=0$}
\label{sec32}

We now construct a measure solution to  problem \eqref{eq22}-\eqref{eq24}\eqref{eq27} for the case $\epsilon=0$.

Let $\textsf{I}_A$ be the characteristic function of a set $A$ ({\it i.e.} $\textsf{I}_A(x,y)=1$ for $(x,y)\in A$ and $=0$ otherwise).  Suppose that
\begin{eqnarray}
&&m^0=\rho_0u_0\ti\LL^2+w^0_m(x)\delta_W=\ti\LL^2+w^0_m(x)\delta_W,\label{eq314}\\ &&n^0=\rho_0v_0\ti\LL^2+w^0_n(x)\delta_W=w^0_n(x)\delta_W,\label{eq315}
\end{eqnarray}
where $w^0_m(x)$, $w^0_n(x)$ are functions to be determined.
Substituting these into \eqref{eq34}, one has
\begin{multline*}
\int_\Omega\p_x\phi\,\dd x\dd y+\sqrt{1+a^2}\left(\int_0^\infty w^0_m(x)\p_x\phi(x,ax)\,\dd x+\int_0^\infty w^0_n(x)\p_y\phi(x,ax)\,\dd x\right)\\+\int_0^\infty\phi(0,y)\,\dd y=0.
\end{multline*}
Since $\p_x\phi(x,ax)=\frac{\dd}{\dd x}\phi(x,ax)-a\p_y\phi(x,ax)$, and using Green theorem, it follows that
\begin{multline*}
-w^0_m(0)\phi(0,0)-\int_0^\infty \frac{\dd}{\dd x}w^0_m(x)\phi(x,ax)\,\dd x+\int_0^\infty (-aw^0_m(x)+w^0_n(x))\p_y\phi(x,ax)\,\dd x\\+\frac{a}{\sqrt{1+a^2}}\int_0^\infty\phi(x,ax)\,\dd x=0.
\end{multline*}
By arbitrariness of $\phi$, this implies that
\begin{eqnarray*}
&&-aw^0_m(x)+w^0_n(x)=0,\quad w^0_m(0)=0;\\
&&\frac{\dd}{\dd x}w^0_m(x)=\frac{a}{\sqrt{1+a^2}}.
\end{eqnarray*}
We then solve that
\begin{eqnarray}\label{eq316}
w^0_m(x)=x\sin\theta, \quad w^0_n(x)=x\sin^2\theta/\cos\theta.
\end{eqnarray}
Similarly we may obtain that
\begin{eqnarray}
&&m^3=E_0\ti\LL^2+(E_0\sin\theta)x\delta_W,\label{eq317}\\ &&n^3=(E_0\sin^2\theta/\cos\theta)x\delta_W.\label{eq318}
\end{eqnarray}

Next we take
\begin{eqnarray}
m^1=\ti\LL^2+w^1_m(x)\delta_W,\quad n^1=w^1_n(x)\delta_W,\quad \wp=0,\label{eq319}
\end{eqnarray}
with $w^1_m(0)=0$. Note that $p_0=0$ if $\epsilon=0$, then from \eqref{eq35} we get
\begin{multline*}
\int_0^\infty\Big(w^1_m(x)-\frac1a w^1_n(x)\Big)\p_x\phi(x,ax)\,\dd x+\int_0^\infty\Big(w^1_p+\frac{a}{\sqrt{1+a^2}}-\frac1a\frac{\dd}{\dd x}w^1_n(x)\Big)\phi(x,ax)\,\dd x=0.
\end{multline*}
This requires that
\begin{eqnarray}\label{eq320}
&&\frac{\dd}{\dd x}w^1_n(x)=aw^1_p+\frac{a^2}{\sqrt{1+a^2}}, \quad w^1_n(0)=0;\\
&&w^1_m(x)=\frac1a w^1_n(x),\quad x\ge0.\label{eq321}
\end{eqnarray}

Then we take
\begin{eqnarray}
m^2=w^2_m(x)\delta_W,\quad n^2=w^2_n(x)\delta_W, \label{eq322}
\end{eqnarray}
with $w^2_m(0)=0$. From \eqref{eq36}, recall $v_0=0$, we have
\begin{equation*}
\int_0^\infty\Big(w^2_m(x)-\frac1aw^2_n(x)\Big)\p_x\phi(x,ax)\,\dd x+\int_0^\infty\Big(w^2_p-\frac1a\frac{\dd}{\dd x}w^2_n(x)\Big)\phi(x,ax)\,\dd x=0,
\end{equation*}
and then
\begin{eqnarray*}
&&\frac{\dd}{\dd x}w^2_n(x)=aw^2_p=-w^1_p,\qquad w^2_n(0)=0;\\
&&w^2_n(x)=aw^2_m(x),\qquad x\ge0.
\end{eqnarray*}

Now let $h(x)=\int_0^x w^1_p(t)\,\dd t$. Then
$$w^2_n(x)=-h(x), \quad w^1_n(x)=ah(x)+\frac{a^2}{\sqrt{1+a^2}}x.$$
By \eqref{eq39}\eqref{eq310},
\begin{eqnarray}\label{eq323}
u|_{W}=\frac{ah(x)+\frac{a^2}{\sqrt{1+a^2}}x}{ax\sin\theta},\quad v|_{W}=-\frac{h(x)}{ax\sin\theta}.
\end{eqnarray}
Furthermore, if the measure $\varrho$ exists, then \eqref{eq316} implies the boundary condition \eqref{eq24}. Therefore we get
$h(x)=-(\sin^3\theta) x,$ hence
\begin{eqnarray}\label{eq324}
w^1_p(x)=-\sin^3\theta,\qquad w^2_p(x)=\sin^2\theta\cos\theta, \end{eqnarray}
and
\begin{eqnarray}\label{eq325add}
w^1_n(x)=(\sin^2\theta\cos\theta )x,\qquad w^2_n(x)=(\sin^3\theta)x.
\end{eqnarray}
From \eqref{eq323}\eqref{eq39}\eqref{eq310}, one has
\begin{eqnarray}\label{eq325}
u=\ti+(\cos^2\theta) \tiw,\qquad v=(\sin\theta\cos\theta) \tiw. \end{eqnarray}
Note that $\sqrt{(w^1_p)^2+(w^2_p)^2}=\sin^2\theta$, one may take
\begin{eqnarray}\label{eq326}
p=(\sin^2\theta)\tiw
\end{eqnarray}
as the pressure.
These results coincide with \eqref{eq223}\eqref{eq224}.

Finally we may determine the measure of density
\begin{eqnarray}\label{eq327}
\varrho=\ti\LL^2+\frac{\sin\theta}{\cos^2\theta}x\delta_W
\end{eqnarray}
by \eqref{eq38}.  This, however, cannot be obtained directly from the elementary analysis in \S \ref{sec23}.

\section{Convergence to singular measure solution in hypersonic limit}\label{sec4}

We now show that, as $\epsilon\to0$, the measures defined in \eqref{eq313} converge vaguely to the corresponding measures defined in \eqref{eq314}\eqref{eq315}\eqref{eq317}\eqref{eq318}\eqref{eq319}\eqref{eq322}.

To this end, 
note that for any $\phi\in C_0(\R)$, and recall $\eta=x/y$,
\begin{eqnarray}\label{eq41}
&&\int_0^{\frac{1}{a}}G(V^\epsilon(\eta))\phi(\eta)\,\dd\eta\nonumber\\
&=&G(V(0))\int_0^{1/\sigma^\epsilon}\phi(\eta)\,\dd\eta+G(V^\epsilon_1)\int_{1/\sigma^\epsilon}^{1/a}\phi(\eta)\,\dd\eta \nonumber\\
&=&\underbrace{G(V(0))\int_0^{1/\sigma^\epsilon}\phi(\eta)\,\dd\eta}_{A_\epsilon}
+\underbrace{\left(G(V^\epsilon_1)(\frac{1}{a}-\frac{1}{\sigma^\epsilon})\right)}_{B_\epsilon}
\underbrace{\left(\frac{1}{\frac{1}{a}-\frac{1}{\sigma^\epsilon}}\int_{1/\sigma^\epsilon}^{1/a}\phi(\eta)\,\dd\eta\right)}_{C_\epsilon}.
\end{eqnarray}
Then \eqref{eq226} implies that, as $\epsilon\to 0$,
\begin{eqnarray*}
A_\epsilon\to G(V(0))\int_0^{1/a}\phi(\eta)\,\dd\eta,\quad
C_\epsilon\to \phi(\frac1a),
\end{eqnarray*}
while by \eqref{eq227},
\begin{eqnarray*}
B_\epsilon&=&\frac{1}{a\sigma^\epsilon}\rho_1^\epsilon(\sigma^\epsilon-a)(v_1^\epsilon, u_1^\epsilon v_1^\epsilon, {v_1^\epsilon}^2+p_1^\epsilon/\rho_1^\epsilon, v_1^\epsilon E_0)^\top\\
&\rightarrow& (1, \cos^2\theta, \cos\theta\sin\theta, E_0)^\top.
\end{eqnarray*}
Therefore we proved that
\begin{eqnarray}\label{eq42}
&&\lim_{\epsilon\to0}\int_0^{\frac{1}{a}}G(V^\epsilon(\eta))\phi(\eta)\,\dd\eta\nonumber\\
&=&G(V(0))\int_0^{1/a}\phi(\eta)\,\dd\eta+ \phi(\frac1a)(1, \cos^2\theta, \cos\theta\sin\theta, E_0)^\top.
\end{eqnarray}

Now return to the $(x,y)$-plane. Let $\psi(x,y)\in C_0(\R^2)$ be a test function, and $\phi(\eta, y)=\psi(\eta y, y)$. Then by change-of-variables and Lebesgue dominant convergence theorem,
\begin{eqnarray}\label{eq43add}
&&\lim_{\epsilon\to0}\int_\Omega G(U^\epsilon(x,y))\psi(x,y)\,\dd x\dd y=\int_0^\infty y\lim_{\epsilon\to0} \int_0^{1/a}G(V^\epsilon(\eta))\phi(\eta,y)\,\dd\eta\,\dd y\nonumber\\
&=&\int_0^{\infty} G(V(0))\int_0^{1/a}\phi(\eta,y)y\,\dd\eta\dd y+(1, \cos^2\theta, \cos\theta\sin\theta, E_0)^\top\int_0^\infty \phi(\frac1a, y)y\,\dd y\nonumber\\
&=&\int_\Omega G(V(0))\psi(x,y)\,\dd x\dd y+(1, \cos^2\theta, \cos\theta\sin\theta, E_0)^\top\int_0^\infty a^2x\psi(x,ax)\,\dd x\nonumber\\
&=&\langle G(V_0)\ti\LL^2, \psi\rangle+\langle w_n(x)\delta_W, \psi\rangle,
\end{eqnarray}
where \begin{eqnarray}\label{eq43}
w_n(x)&=&\frac{a^2x}{\sqrt{1+a^2}}(1, \cos^2\theta, \cos\theta\sin\theta, E_0)^\top\nonumber\\&=&\frac{\sin^2\theta}{\cos\theta}x(1, \cos^2\theta, \cos\theta\sin\theta, E_0)^\top,
\end{eqnarray}
which are exactly $(w^0_n(x), w^1_n(x), w^2_n(x), w^3_n(x))^\top$, the weights of measures $(n^0, n^1, n^2, n^3)^\top$ we  calculated in \S \ref{sec32}.

Recall that
\begin{eqnarray}\label{eq44add}
p^\epsilon(x,y)=\begin{cases} p_0^\epsilon=\frac{\epsilon}{\epsilon+1}E'_0,& 0\le \frac{x}{y}<\frac{1}{\sigma^\epsilon},\\
p_1^\epsilon, &\frac{1}{\sigma^\epsilon}<\frac{x}{y}<\frac1a,
\end{cases}
\end{eqnarray}
so
\begin{eqnarray}\label{eq44}
\lim_{\epsilon\to 0}p^\epsilon=0
\end{eqnarray}
point-wise and also in the sense of vague convergence of measures (if we identify $p^\epsilon$ with $p^\epsilon\ti\LL^2$). Then
\begin{eqnarray*}
G(V_0)=(0, 0, 0, 0)^\top
\end{eqnarray*}
and we proved
\begin{eqnarray}\label{eq45}
(n^0(\epsilon), n^1(\epsilon), n^2(\epsilon), n^3(\epsilon))\to (n^0, n^1, n^2, n^3)
\end{eqnarray}
vaguely by \eqref{eq43}.

Similarly, replacing $G$ in \eqref{eq41} by $F$, we can prove that
\begin{eqnarray}\label{eq47}
(m^0(\epsilon), m^1(\epsilon), m^2(\epsilon), m^3(\epsilon))\to (m^0, m^1, m^2, m^3)
\end{eqnarray}
in the sense of vague convergence of measures.

It is now easy to show consistency of \eqref{eq33}-\eqref{eq37} in Definition \ref{def32}.  Using \eqref{eq313}, we may rewrite \eqref{eq211} as
\begin{eqnarray}
\langle m^0(\epsilon), \p_x\phi\rangle+ \langle n^0(\epsilon), \p_y\phi\rangle+\int_0^\infty\phi(0,y)\,\dd y=0,&&\label{eq48}\\
\langle m^1(\epsilon), \p_x\phi\rangle+ \langle n^1(\epsilon), \p_y\phi\rangle+\langle \wp(\epsilon), \p_x\phi\rangle\qquad\qquad&&\nonumber\\-\int_0^\infty \frac{a}{\sqrt{1+a^2}}p_1^\epsilon\phi(x,ax)\sqrt{1+a^2}\,\dd x+\int_0^\infty(1+p^\epsilon_0)\phi(0,y)\,\dd y=0,&&\label{eq49}\\
\langle m^2(\epsilon), \p_x\phi\rangle+ \langle n^2(\epsilon), \p_y\phi\rangle+\langle \wp(\epsilon), \p_y\phi\rangle\qquad\qquad&&\nonumber\\-\int_0^\infty\frac{-1}{\sqrt{1+a^2}}p_1^\epsilon\phi(x,ax)\sqrt{1+a^2}\,\dd x=0,&&\label{eq410}\\
\langle m^3(\epsilon), \p_x\phi\rangle+ \langle n^3(\epsilon), \p_y\phi\rangle+\int_0^\infty E_0\phi(0,y)\,\dd y=0,&&\label{eq411}
\end{eqnarray}
where $\langle\wp(\epsilon),\phi\rangle=\int_\Omega p^\epsilon \phi\,\dd x\dd y$, and $p^\epsilon$ is given by \eqref{eq44add}. Now as $\epsilon\to0,$ recall that $\langle\wp(\epsilon),\phi\rangle\to0$, $p_0^\epsilon\to0$, and $p_1^\epsilon\to\sin^2\theta$, then
\begin{eqnarray}
-\int_0^\infty \frac{a}{\sqrt{1+a^2}}p_1^\epsilon\phi(x,ax)\sqrt{1+a^2}\,\dd x\to\langle w^1_p\delta_W, \phi\rangle,&&\\
\int_0^\infty\frac{1}{\sqrt{1+a^2}}p_1^\epsilon\phi(x,ax)\sqrt{1+a^2}\,\dd x\to\langle w^2_p\delta_W, \phi\rangle,&&
\end{eqnarray}
with the same $w^1_p, w^2_p$ as defined in \eqref{eq324}. Thanks to \eqref{eq45}\eqref{eq47}, we then have \eqref{eq34}-\eqref{eq37} as $\epsilon\to 0$.

We summarize the main results of this paper as the following theorem.

\begin{theorem}\label{thm41}
The problem \eqref{eq22}-\eqref{eq24}\eqref{eq27} has measure solutions for each $\epsilon=\gamma-1\ge0$ in the sense of Definition \ref{def32}, and these solutions converge vaguely as $\epsilon\to 0$ (or Mach number of the upcoming flow goes to infinity) to a singular measure solution with density containing a weighted Dirac measure on the surface of the wedge, given by  \eqref{eq314}\eqref{eq315}\eqref{eq317}\eqref{eq318}\eqref{eq319}\eqref{eq322} or \eqref{eq325}-\eqref{eq327}.
\end{theorem}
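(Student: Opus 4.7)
The plan is to assemble the three ingredients already developed in Sections \ref{sec2}--\ref{sec4} into a single statement. The proof splits naturally into three parts: (a) verifying that the shock-polar solution $U^\epsilon$ of \eqref{eq222} defines a measure solution in the sense of Definition \ref{def32} for every $\epsilon>0$; (b) verifying that the measures constructed in \S\ref{sec32} define a measure solution for $\epsilon=0$; and (c) establishing the vague convergence as $\epsilon\to 0$.

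For part (a), I would simply check that the data \eqref{eq313} meet every clause of Definition \ref{def32}. Condition (ii) reduces to the standard integral identity \eqref{eq211}, which $U^\epsilon$ satisfies because it is a Lax-admissible piecewise-constant weak solution with a single oblique shock; the boundary-pressure contribution from $W$ is exactly what \eqref{eq313} prescribes for $w_p^1,w_p^2$. Condition (i) is automatic: $(w_p^1,w_p^2)=-p_1^\epsilon\n$ is parallel to $\n$. Conditions (iii)--(iv) hold with $\varrho=\rho^\epsilon\LL^2$, since $\rho^\epsilon,u^\epsilon,v^\epsilon,E^\epsilon$ are $L^\infty$ and classical entropy conditions are built into the shock-polar construction. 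For part (b), the computation in \S\ref{sec32} already shows that the Radon measures in \eqref{eq314}--\eqref{eq322} together with the boundary weights \eqref{eq324} satisfy the integral equations \eqref{eq34}--\eqref{eq37}; what remains is to record that the slip condition \eqref{eq33} and the compatibility identities \eqref{eq38}--\eqref{eq311} hold, which follows because $(w_p^1,w_p^2)=(-\sin^3\theta,\sin^2\theta\cos\theta)=\sin^2\theta\,\n$ and because the weights $w_m^k,w_n^k$ were derived precisely to make the ratio conditions hold with $u,v,E$ given by \eqref{eq325} and $E\equiv E_0$.

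Part (c) is the heart of the argument and is exactly what \eqref{eq41}--\eqref{eq47} accomplish. The strategy is to use the self-similar form $U^\epsilon(x,y)=V^\epsilon(x/y)$ to reduce a two-dimensional integral against a test function $\psi\in C_0(\R^2)$ to a one-dimensional integral in $\eta=x/y$; the integral then splits into a contribution from $\{0\le\eta<1/\sigma^\epsilon\}$, which converges to the bulk term $F(V_0)\ti\LL^2$ (resp.\ $G(V_0)\ti\LL^2$), and a contribution from the narrow wedge $\{1/\sigma^\epsilon<\eta<1/a\}$, whose width $\frac{1}{a}-\frac{1}{\sigma^\epsilon}=O(\epsilon)$ combines with the blowing-up density $\rho_1^\epsilon=O(1/\epsilon)$ via Lemma \ref{lem22} (specifically \eqref{eq227}) to produce a finite limit that, after converting back to $(x,y)$-variables, reads as a Dirac mass on $W$ with precisely the weights $w_m^k,w_n^k$ computed in \S\ref{sec32}. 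The pressure measure $\wp(\epsilon)=p^\epsilon\LL^2$ tends to zero vaguely since $\|p^\epsilon\|_{L^\infty}$ stays bounded while $p_0^\epsilon\to 0$ and the support of $p_1^\epsilon$ collapses; nonetheless the boundary line-integrals of $p_1^\epsilon$ against $\phi|_W$ survive and converge to $\langle w_p^{1,2}\delta_W,\phi\rangle$, which is exactly the mechanism by which momentum concentration arises at the wedge.

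The main obstacle I expect is bookkeeping rather than conceptual: one must keep track of the pressure simultaneously disappearing as a bulk measure and persisting as a boundary force, and verify that the compatibility identities \eqref{eq38}--\eqref{eq311} still hold in the singular limit, where $m^k,n^k$ acquire a Dirac part on $W$ while $\varrho$ must acquire a matching Dirac part so that the ratios $m^k/m^0$, $n^k/n^0$ reproduce the traces $(u|_W,v|_W)=(\cos^2\theta,\sin\theta\cos\theta)$ predicted by Lemma \ref{lem22}. This is the point where \eqref{eq327} is forced on us by consistency (it cannot be obtained from the elementary analysis alone), and it must be verified that with this choice every clause of Definition \ref{def32} continues to hold at $\epsilon=0$, closing the proof.
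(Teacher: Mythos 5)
Your proposal is correct and follows essentially the same route as the paper: parts (a) and (b) correspond to the verification after \eqref{eq313} and the construction in \S\ref{sec32}, and part (c) is precisely the decomposition \eqref{eq41} into a bulk term and a collapsing-wedge term whose limit is controlled by \eqref{eq226}--\eqref{eq227}, with the pressure vanishing as a bulk measure while its boundary trace survives as the force $w_p^1,w_p^2$ in \eqref{eq324}. Nothing essential is missing.
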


\section*{Acknowledgements}
The research of Aifang Qu is supported by National Natural Science Foundation of China under Grant No. 11571357.
The research of Hairong
Yuan (the corresponding author) is supported by National Natural Science Foundation of China under Grant No. 11371141, and by Science and Technology Commission of Shanghai Municipality (STCSM) under grant No. 18dz2271000.

\end{document}